\newcommand{\C}{\mathbb{C}}
\newcommand{\Z}{\mathbb{Z}}
\newcommand{\N}{\mathbb{N}}
\newcommand{\G}{\mathcal{G}}
\newcommand{\E}{\mathcal{E}}
\newcommand{\Odd}{\mathcal{O}}
\newcommand\shorttitle{Approximation to Conjecture}
\newcommand\authors{Ronald Orozco L\'opez}
\newtheorem{theorem}{Theorem}
\newtheorem{conjecture}{Conjecture}
\newtheorem{lemma}{Lemma}
\numberwithin{equation}{section}
\begin{document}
\title{\textbf{An Approximation to Proof of the Circulant Hadamard Conjecture}}

\author{Ronald Orozco L\'opez}

\newcommand{\Addresses}{{
  \bigskip
  \footnotesize

  \textsc{Department of Mathematics, Universidad de los Andes,
    Bogot\'a Colombia,}\par\nopagebreak
  \textit{E-mail address}, \texttt{rj.orozco@uniandes.edu.co}

}}

\maketitle

\begin{abstract}
It is known that if a circulant Hadamard matrix of order $n$ exists then $n$ must be of the form $n=4m^{2}$ 
for some odd integer $m$. In this paper we use the structure constant of Schur ring of $\Z_{2}^{4m^{2}}$ 
to prove that there is no circulant Hadamard matrix in $\Z_{2}^{4m^{2}}$ except possibly for sequences with Hamming weight $a+b$, with $\frac{m^{2}-m}{2}\leq a\leq\frac{3m^{2}-m}{2}$ and $b=2m^{2}-m-a$ and with
$\frac{m^{2}+m}{2}\leq 2m^{2}-a\leq\frac{3m^{2}+m}{2}$ and $b=m+a$.
\end{abstract}
{\bf Keywords:} structure constant, autocorrelation, circulant Hadamard matrices\\
{\bf Mathematics Subject Classification:} 05E15,20B30,05B20

\section{Introduction}

An Hadamard matrix $H$ is an $n$ by $n$ matrix all of whose entries are $+1$ or 
$-1$ which satisfies $HH^{t}=nI_{n}$, where $H^{t}$ is the transpose of $H$
and $I_{n}$ is the unit matrix of order $n$. It is also known that, if an
Hadamard matrix of order $n>1$ exists, $n$ must have the value $2$ or be
divisible by 4. There are several conjectures associated with Hadamard matrices. 
The main conjecture concerns its existence. This states that an Hadamard matrix exists for
all multiple order of 4. Another very important conjecture states that no circulant Hadamard matrix
exists if the order is different from 4 [3],[4],[5],[6],[7],[8]. 

On the circulant Hadamard conjecture the first significant result was made by R.J. Turyn [3] using arguments from
algebraic number theory. He prove that if a circulant Hadamard matrix of order $n$ exists then 
$n$ must be of the form $n=4m^{2}$ for some odd integer $m$ which is not a prime-power. Many important 
results about this conjecture can be found in [4],[5],[6],[7],[8]. In this paper we use the structure constant of Schur ring of $\Z_{2}^{4m^{2}}$ to prove that there is no circulant Hadamard matrix in $\Z_{2}^{4m^{2}}$ except
for sequences with Hamming weight $a+b$, with $\frac{m^{2}-m}{2}\leq a\leq\frac{3m^{2}-m}{2}$ and $b=2m^{2}-m-a$
and with $\frac{m^{2}+m}{2}\leq 2m^{2}-a\leq\frac{3m^{2}+m}{2}$ and $b=m+a$.

\section{Schur Ring over $\mathbb{Z}_{2}^{n}$}
Let $G$ be a finite group with identity element $e$ and $\C[G]$ the group algebra of all formal sums 
$\sum_{g\in G}a_{g}g$, $a_{g}\in \C$, $g\in G$. For $T\subset G$, the element $\sum_{g\in T}g$ will 
be denoted by $\overline{T}$. Such an element is also called a $\textit{simple quantity}$. 
The transpose of $\alpha = \sum_{g\in G}a_{g}g$ is defined as $\alpha^{\top} = \sum_{g\in G}a_{g}(g^{-1})$. 
Let $\{T_{0},T_{1},...,T_{r}\}$ be a partition of $G$ and let $S$ be the subspace of $\C[G]$ spanned by 
$\overline{T_{1}},\overline{T_{2}},...,\overline{T_{r}}$.  We say that $S$ is a $\textit{Schur ring}$ 
($S$-ring, for short) over $G$ if: 
\begin{enumerate}
\item $T_{0} = \lbrace e\rbrace$, 
\item for each $i$, there is a $j$ such that $T_{i}^{\top} = T_{j}$,
\item for each $i$ and $j$, we have $\overline{T_{i}}\overline{T_{j}} = \sum_{k=1}^{r}\lambda_{i,j,k}\overline{T_{k}}$, for constants $\lambda_{i,j,k}\in\C$.
\end{enumerate}

The numbers $\lambda_{i,j,k}$ are the structure constants of $S$ with respect to the linear base 
$\{\overline{T_{0}},\overline{T_{1}},...,\overline{T_{r}}\}$. 
The sets $T_{i}$ are called the \textit{basic sets} of the $S$-ring $S$. Any union of them is called an 
$S$-set. Thus, $X\subseteq G$ is an $S$-set if and only if $\overline{X}\in S$. The set of all $S$-set 
is closed with respect to taking inverse and product. Any subgroup of $G$ that is an $S$-set, is called 
an $S$-\textit{subgroup} of $G$ or $S$-\textit{group}. (For details, see [1],[2])\\

In this paper denote by $\Z_{2}$ the cyclic group of order 2 with elements $+$ and $-$(where + and $-$ mean 1 and $-1$ respectively). Let $\Z_{2}^{n}=\overset{n}{\overbrace{\Z_{2}\times \cdots \times \Z_{2}}}$. Then all $X\in\Z_{2}^{n}$ are sequences of $+$ and $-$ and will be called $\Z_{2}-$\textit{sequences}. 

Let $\omega(X)$ denote the Hamming weight of $X\in\Z_{2}^{n}$. Thus, $\omega(X)$ is the number of $+$ in any 
$\Z_{2}-$sequences $X$ of $\Z_{2}^{n}$. Now let $\G_{n}(k)$ be the subset of $\Z_{2}^{n}$ such that $\omega(X)=k$ for all $X\in\G_{n}(k)$, where $0\leq k\leq n$. 

We let $T_{i}=\G_{n}(n-i)$. It is straightforward to prove that the partition $S=\{\G_{n}(0),...,\G_{n}(n)\}$ 
is a partition of $\Z_{2}^{n}$. And also $S$ is an $S-$ring over 
$\Z_{2}^{n}$. From [2] it is know that the constant structure $\lambda_{i,j,k}$ is equal to
\begin{equation}\label{estruc_cons}
\lambda_{i,j,k}=
\begin{cases}
0&\mbox{if } i+j-k \mbox{is an odd number}\\
\binom{k}{(j-i+k)/2}\binom{n-k}{(j+i-k)/2} &\mbox{if } i+j-k \mbox{is an even number}
\end{cases}
\end{equation}

From (\ref{estruc_cons}) is follows that
\begingroup\makeatletter\def\f@size{10}\check@mathfonts
\begin{equation}\label{producto}
\G_{n}(a)\G_{n}(b)=
\begin{cases}
\bigcup\limits_{i=0}^{a}\G_{n}(n-a-b+2i), & 0\leq a\leq \left[\dfrac{n}{2}\right], a\leq b\leq n-a,\\
\bigcup\limits_{i=0}^{n-a}\G_{n}(a+b-n+2i), & \left[\dfrac{n}{2}\right]+1\leq a\leq n, n-a\leq b\leq a.  
\end{cases}
\end{equation}
\endgroup

It follows directly from (\ref{estruc_cons}) that $\lambda_{i,j,2k+1}=0$ if $i+j$ is even and 
$\lambda_{i,j,2k}=0$ if $i+j$ is odd. The union of all sets $\G_{n}(2a)$ in $S$ will be called 
\textbf{the even partition} of $S$, and will be designated by $\E_{n}$. The \textbf{odd partition} 
$\Odd_{n}$ is defined analogously. The sets $\E_{2n}$ and $\Odd_{2n+1}$ are groups of order 
$2^{2n-1}$ and $2^{2n}$, respectively.  

The following result is taken from [9]. Let $S=\{\G_{n}(0),...,\G_{n}(n)\}$ be an $S$-set of 
$\mathbb{Z}_{2}^{n}$. A $S$-set $M$ of $S$ will be call complete maximal $S$-set if holds
\begin{enumerate}
\item $\G_{n}(a)\G_{n}(b)\supset\max S$ for all $\G_{n}(a),\G_{n}(b)$ in $S$,
\item There is no $\G_{n}(c)$ in $S$ such that $\G_{n}(c)\G_{n}(a)\supset\G_{n}(b)$ and 
$\G_{n}^{2}(c)\supset\G_{n}(a)$ for some $\G_{n}(a),\G_{n}(b)$ in $M$,
\end{enumerate}
where $\max S$ is the largest basic set of $S$.

The complete maximal $S-$sets even or odd will be designated by $\left[ \E_{n}\right] $ and 
$\left[\Odd_{n}\right]$, respectively. 

The following theorem tell us how many complete maximal $S-$sets exist in $\Z_{2}^{4n}$.

\begin{theorem}
There exist exactly two complete even maximal $S-$sets, of order $n$ and order $n+1$, in $\Z_{2}^{4n}$.
\end{theorem}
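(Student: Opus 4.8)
The plan is to reduce the whole statement to a combinatorial question about even weights, using only the product rule \eqref{producto}, and then to read off the two extremal families from it. First I would pin down $\max S$. Among the even basic sets $\G_{4n}(0),\G_{4n}(2),\dots,\G_{4n}(4n)$ the cardinalities are the binomial coefficients $\binom{4n}{2j}$, which are unimodal in $j$ and attain their unique maximum at the central weight $2n$; hence $\max S=\G_{4n}(2n)$. I would also record the symmetry $\binom{4n}{k}=\binom{4n}{4n-k}$, so that the even partition is symmetric about the weight $2n$; this symmetry is what eventually forces the two answers to differ by exactly one.

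Next I would translate condition~1 into arithmetic on the weights. Writing a candidate as $M=\{\G_{4n}(a):a\in A\}$ with $A$ a set of even integers, a direct reading of \eqref{producto} shows that $\G_{4n}(a)\G_{4n}(b)$ is the union of the $\G_{4n}(w)$ for $w$ running over the even integers from $|4n-a-b|$ to $4n-|a-b|$. Therefore $\G_{4n}(a)\G_{4n}(b)\supset\G_{4n}(2n)$ holds precisely when
\begin{equation*}
2n\le a+b\le 6n\qquad\text{and}\qquad |a-b|\le 2n .
\end{equation*}
Imposing this over the relevant pairs confines $A$ to a band of even weights symmetric about $2n$, whose endpoints are pinned by the two boundary inequalities $a+b\ge 2n$ (for the two smallest weights) and $a+b\le 6n$ (for the two largest), together with the spread bound $|a-b|\le 2n$. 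I would solve this small linear system to list the maximal admissible bands explicitly.

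I would then bring in condition~2, which is the completeness requirement and the genuine content of the statement. For each admissible band $M$ I would test, again through \eqref{producto}, whether some $\G_{4n}(c)$ can serve as a witness, i.e.\ whether the weight inequalities arising from $\G_{4n}(c)\G_{4n}(a)\supset\G_{4n}(b)$ and $\G_{4n}^{2}(c)\supset\G_{4n}(a)$ are simultaneously solvable for some $a,b\in A$. Showing that no such $c$ exists eliminates all bands except two: the band anchored symmetrically at $2n$ and its one-step translate dictated by the binomial symmetry above. Counting the even weights in each surviving band, with the parity of $n$ entering through the floor in \eqref{producto}, then produces the two cardinalities $n$ and $n+1$.

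The step I expect to be the main obstacle is precisely the condition~2 analysis: condition~1 only narrows the candidates to a family of admissible bands, and it is the \emph{non-existence} of a generating witness $\G_{4n}(c)$ that must be established for exactly two of them and refuted for the rest. This calls for a careful, mildly parity-dependent case analysis of the weight inequalities near the central weight $2n$ and at the two endpoints of each band; the binomial symmetry about $2n$ is the structural fact guaranteeing that the survivors occur in the two orders $n$ and $n+1$ rather than in a single symmetric size.
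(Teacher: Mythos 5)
First, a point of comparison: the paper does not actually prove this theorem --- it is quoted from reference [9] --- so there is no in-paper proof to measure your argument against. Judged on its own terms, your proposal gets the preliminary reductions right: $\max S=\G_{4n}(2n)$ because the central binomial coefficient is the unique maximum, and $\G_{4n}(a)\G_{4n}(b)$ is the union of the $\G_{4n}(w)$ with $|4n-a-b|\le w\le 4n-|a-b|$ and $w\equiv a+b\Mod{2}$, so that condition 1 applied with $a=b$ forces $n\le a\le 3n$, which is exactly the consequence the paper extracts from the theorem.

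The genuine gap is your decision to work only with even weights. The parity clause of \eqref{estruc_cons} that you suppress --- $\lambda_{a,b,2n}=0$ whenever $a+b$ is odd --- is precisely what makes condition 1 fail for any pair of weights of opposite parity, and hence what splits the admissible families into an even family and an odd family. The two complete maximal $S$-sets of the theorem are $[\E_{4n}]$ and $[\Odd_{4n}]$; this is how the theorem is used throughout Section 3, where $[\Odd_{4m^{2}}]$ consists of the $n+1$ weights $m^{2}+2i$, $0\le i\le m^{2}$ (odd, since $m$ is odd), while $[\E_{4m^{2}}]$ consists of the $n$ even weights strictly between them. The orders $n$ and $n+1$ are simply the counts of integers of each parity in $[n,3n]$; they do not come from binomial symmetry about $2n$, nor from a ``one-step translate'' of an even band --- translating a band of even integers by one step produces odd integers, contradicting your standing assumption that $A$ is a set of even weights. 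Within the even weights alone, condition 1 plus maximality yields a \emph{single} band, of order $n$ or $n+1$ according to the parity of $n$, never two. Finally, the condition-2 analysis, which you yourself identify as carrying the main content, is left entirely as a plan (``I would test\dots''), so even after the parity issue is repaired the proposal remains an outline rather than a proof.
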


From the proof of this theorem it follows that if $\G_{4n}(a)$ is in $[\E_{4n}]$ or in $[\Odd_{4n}]$, then 
$n\leq a\leq 3n$.

\section{The Approximation}
A circulant Hadamard matrix of order $n$ is a square matrix of the form 
\begin{equation}
H =
\left(\begin{array}{cccc}
	a_{1} & a_{2} & \cdots & a_{n}\\
	a_{n} & a_{1} & \cdots & a_{n-1}\\
	\cdots & \cdots & \cdots & \cdots\\
	a_{2} & a_{3} & \cdots & a_{1}
\end{array}\right)
\end{equation}
No circulant Hadamard matrix of order larger than 4 has ever been found. This let the following
\begin{conjecture}
No circulant Hadamard matrix of order larger than 4 exists.
\end{conjecture}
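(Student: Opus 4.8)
The plan is to transport the Hadamard condition bodily into the $S$-ring of $\Z_{2}^{4m^{2}}$ and then force a contradiction by playing the weight data coming from the complete maximal $S$-sets against the extra rigidity imposed by the cyclic shift. First I would fix the dictionary. A circulant Hadamard matrix of order $n=4m^{2}$ is determined by its first row $X=(a_{1},\dots,a_{n})\in\Z_{2}^{n}$, and $HH^{t}=nI_{n}$ is equivalent to the vanishing of every nontrivial periodic autocorrelation $C(k)=\sum_{i}a_{i}a_{i+k}$ (indices mod $n$), together with $C(0)=n$. Writing $\sigma$ for the cyclic shift of coordinates, which is an automorphism of $\Z_{2}^{n}$ preserving weight, a short computation gives $\omega\big(X\cdot\sigma^{k}(X)\big)=\tfrac12\big(n+C(k)\big)$, so $C(k)=0$ is exactly the statement $X\cdot\sigma^{k}(X)\in\G_{n}(2m^{2})=\G_{n}(n/2)$. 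Thus a circulant Hadamard matrix is a $\Z_{2}$-sequence $X$ all of whose $n-1$ nontrivial shift products land in the single balanced basic set $\G_{n}(n/2)$, and the whole problem becomes one about products inside the $S$-ring $\{\G_{n}(0),\dots,\G_{n}(n)\}$.

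The second step pins the weight. Summing over all shifts, $\sum_{k=0}^{n-1}C(k)=\big(\sum_{i}a_{i}\big)^{2}=(2w-n)^{2}$ where $w=\omega(X)$, while the Hadamard condition forces the left side to equal $C(0)=n=4m^{2}$. Hence $2w-n=\pm2m$ and $w=2m^{2}\pm m$, two admissible weights, both sitting inside the window $m^{2}\le w\le3m^{2}$ guaranteed by the consequence of the Theorem on complete maximal $S$-sets (here $4n=4m^{2}$, so the bound reads $m^{2}\le a\le3m^{2}$). The residual freedom that the approximation cannot remove is therefore not in the total weight but in how the $+$'s distribute among sub-blocks, which is precisely the interval of admissible $(a,b)$ appearing in the abstract. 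At either weight, the requirement that every nontrivial shift product lie in $\G_{n}(n/2)$ is equivalent, after passing to the support $D=\{i:a_{i}=+\}$, to $|D\cap(D+k)|=w-m^{2}=m^{2}\pm m$ for all $k\neq0$; that is, $D$ is a cyclic $(4m^{2},\,2m^{2}\pm m,\,m^{2}\pm m)$ Menon--Hadamard difference set. The conjecture for $n>4$ is thereby reduced to excluding such a $D$ for every odd $m>1$.

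The main obstacle, and where I would concentrate the work, is that the partition into the $\G_{n}(k)$ is invariant under the \emph{full} symmetric group on the coordinates, so the structure constants $\lambda_{i,j,k}$ of (\ref{estruc_cons}) record only \emph{how many} shift products carry a given weight, never \emph{which} shifts do. Once $w=2m^{2}\pm m$ is forced, the global weight algebra is automatically consistent with all products being balanced, so the $S_{n}$-invariant ring alone cannot see the obstruction and cannot close the sub-block window. To finish I would refine the partition by the orbits of the cyclic group $\langle\sigma\rangle$, producing a finer $S$-ring over the index group $\Z_{n}$ (of order $n=4m^{2}$) whose basic sets remember the shift $k$, and show that its structure constants cannot realize $|D\cap(D+k)|=m^{2}\pm m$ simultaneously for all $k\neq0$. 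Writing $\overline{\Z_{n}}=\sum_{g\in\Z_{n}}g$, this is the group-ring identity
\[
\overline{D}\,\overline{D}^{\top}=m^{2}e+(m^{2}\pm m)\,\overline{\Z_{n}}\quad\text{in }\C[\Z_{n}],
\]
whose hallmark is the value $k-\lambda=m^{2}$.

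The decisive input I expect is character-theoretic: every nontrivial character of $\Z_{n}$ sends $\overline{D}$ to an algebraic integer of modulus $m$, and the hard part is transferring the rigidity of the $S_{n}$-invariant weight ring onto these character values through the $\langle\sigma\rangle$-refined ring, rather than discarding the $S$-ring framework in favour of classical number theory. After the transfer, self-conjugacy and field-descent arguments in the spirit of Turyn's original analysis would be brought to bear on the integers of modulus $m$ to force $m=1$, which is the desired nonexistence for all $n>4$. I anticipate that carrying this transfer cleanly through the refined Schur ring is the genuine difficulty, since it is exactly the point at which the purely combinatorial weight data must be upgraded to arithmetic information about the cyclic structure.
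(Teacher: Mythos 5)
The statement you are proving is, in the paper itself, only a \emph{conjecture}: the paper never claims to prove it, and its actual theorems are strictly partial (no circulant Hadamard matrix in $[\E_{4m^{2}}]$, exclusion of all weights in $[\Odd_{4m^{2}}]$ except $2m^{2}\pm m$, and a narrowing of the sub-block weights $(a,b)$). Your first two steps are correct and in fact reproduce the paper's partial results: the dictionary $\mathsf{P}_{X}(k)=0\iff XC^{k}X\in\G_{4m^{2}}(2m^{2})$ is the paper's autocorrelation setup, and the identity $\sum_{k}\mathsf{P}_{X}(k)=(2w-4m^{2})^{2}$ forcing $w=2m^{2}\pm m$ is exactly Theorems 3 and 4. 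The reduction to a $(4m^{2},2m^{2}\pm m,m^{2}\pm m)$ Menon difference set in $\Z_{4m^{2}}$ is likewise standard and is stated in the paper after Theorem 4. Up to that point you and the paper are on the same road, and the road ends in the same place.

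The genuine gap is your final step. You write that after transferring to $\C[\Z_{4m^{2}}]$, where every nontrivial character sends $\overline{D}$ to an algebraic integer of modulus $m$, one would ``bring self-conjugacy and field-descent arguments in the spirit of Turyn's original analysis to bear \ldots to force $m=1$.'' This is not a proof step; it is a restatement of the open problem. Turyn's self-conjugacy method applies only under arithmetic hypotheses on $m$ (it rules out, e.g., $m$ a prime power, which is why the paper cites him for exactly that), and Schmidt's field-descent machinery [4]--[8] yields upper bounds on character-value fields that exclude many, but provably not all, odd $m>1$; infinitely many orders survive every known refinement of these techniques. There is no known argument, and you supply none, showing that a cyclotomic integer of modulus $m$ at every nontrivial character is impossible for all odd $m>1$. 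Your intermediate proposal --- refining the weight partition by $\langle\sigma\rangle$-orbits into a finer $S$-ring over $\Z_{4m^{2}}$ and showing its structure constants cannot realize constant autocorrelation $m^{2}\pm m$ --- is also unsubstantiated: you neither verify that the refined partition is a Schur ring with controllable structure constants nor exhibit any concrete obstruction it would detect, and you yourself concede the $S_{n}$-invariant constants (\ref{estruc_cons}) are consistent with a Hadamard sequence of weight $2m^{2}\pm m$. So the proposal correctly rederives what the paper proves, then replaces the missing core of the conjecture with a pointer to methods that are known to be insufficient.
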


Let $C$ denote the cyclic permutation on the components $+$ and $-$ of $X$ in $\Z_{2}^{n}$ such that

\begin{equation}\label{cir1}
C(X)=C\left( x_{0},x_{1},...,x_{n-2},x_{n-1}\right) =\left(x_{1},x_{2},x_{3},...,,x_{x_{0}}\right),
\end{equation}

that is, $C(x_{i})=x_{(i+1) mod n}$.
The permutation $C$ is a generator of cyclic group $\left\langle C\right\rangle$ of order $n$. 
Let $X_{C}=Orb_{\left\langle C\right\rangle }X=\{C^{i}(X):C^{i}\in
\left\langle C\right\rangle \}$. Therefore, $\left\langle C\right\rangle$
defines a partition in equivalent class on $\Z_{2}^{n}$ and this we shall denote by $\Z_{2C}^{n}$. From here it follows that all circulant Hadamard matrix $H$ is a equivalence class by $\left\langle C\right\rangle$.\\

Following Turyn and from theorem 1, if there is such $H$, then $H\in[\E_{4m^{2}}]$ or $H\in[\Odd_{4m^{2}}]$. 
Next we will show that $H$ is never contained in $[\E_{4m^{2}}]$. First we tried the following lemma

\begin{lemma}
For all $n\in\N$ and for $0\leq a\leq 2n$ it hold that
\begin{enumerate}
\item $\G_{2n}(a)=\bigcup_{i=0}^{a}\G_{n}(i)\times\G_{n}(a-i)$ if $0\leq a\leq n$.
\item $\G_{2n}(2n-a)=\bigcup_{i=0}^{a}\G_{n}(n-i)\times\G_{n}(n-a+i)$ if $n+1\leq a\leq 2n$.
\end{enumerate}
\end{lemma}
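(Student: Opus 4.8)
The plan is to exploit the canonical identification of $\Z_2^{2n}$ with $\Z_2^n\times\Z_2^n$ that splits each sequence $X=(x_0,\ldots,x_{2n-1})$ into its left block $X'=(x_0,\ldots,x_{n-1})$ and right block $X''=(x_n,\ldots,x_{2n-1})$. Under this identification the Hamming weight is additive, $\omega(X)=\omega(X')+\omega(X'')$, since the number of $+$ symbols in $X$ is the sum of the numbers in the two blocks. This single observation drives the entire lemma, and each assertion then reduces to determining which pairs of block-weights are admissible.

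For Part (1) I fix $a$ with $0\le a\le n$ and take any $X\in\G_{2n}(a)$. Setting $i=\omega(X')$ forces $\omega(X'')=a-i$, so $X\in\G_n(i)\times\G_n(a-i)$, and conversely every element of such a product has total weight $a$. It remains only to read off the admissible indices: the constraints $0\le i\le n$ and $0\le a-i\le n$, combined with $a\le n$, collapse to $0\le i\le a$, which is exactly the range in the stated union. Because $i=\omega(X')$ is determined by $X$, distinct terms involve left blocks of distinct weights and are therefore disjoint, so the union is in fact a partition of $\G_{2n}(a)$.

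For Part (2) I would run the same argument for the weight $2n-a$ with $n+1\le a\le 2n$, but parametrize the left block by $\omega(X')=n-i$, whence $\omega(X'')=(2n-a)-(n-i)=n-a+i$; this produces precisely the factors $\G_n(n-i)\times\G_n(n-a+i)$ of the statement. The sole subtlety is the index range. Imposing $0\le n-i\le n$ and $0\le n-a+i\le n$ gives the effective range $a-n\le i\le n$, which is strictly narrower than the written range $0\le i\le a$. For $i<a-n$ the index $n-a+i$ is negative, so $\G_n(n-a+i)=\emptyset$, and for $i>n$ the index $n-i$ is negative, so $\G_n(n-i)=\emptyset$; hence the padding terms contribute nothing and the union is unaffected. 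So the identity holds as written, again as a disjoint union over the effective range.

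I do not expect any genuine obstacle: the whole content is the weight-additivity of the block decomposition, and the rest is elementary range bookkeeping. The one place demanding care is exactly that bookkeeping in Part (2), where one must check both that every sequence of weight $2n-a$ is captured and that the empty padding introduces nothing spurious. I would also remark, after the substitution $j=n-i$, that the decomposition of Part (2) coincides with that of Part (1) on their common weight range $0\le 2n-a\le n-1$, so the two assertions are ultimately the same identity recorded under a count-up and a count-down indexing of the left block.
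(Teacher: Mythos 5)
Your proof is correct, and for Part (1) it coincides with the paper's argument: split $X\in\Z_{2}^{2n}$ into two blocks of length $n$, use additivity of the Hamming weight, and check both inclusions. For Part (2) you diverge: the paper disposes of it in one line by multiplying the decomposition of Part (1) by $-1$ (complementation swaps $\G_{n}(k)$ with $\G_{n}(n-k)$), whereas you rerun the block argument directly with the left-block weight parametrized as $n-i$. Your route is slightly longer but buys genuine precision that the paper omits: you observe that the effective index range is $a-n\leq i\leq n$ rather than the stated $0\leq i\leq a$, and that the out-of-range terms involve $\G_{n}(k)$ with $k<0$, which must be read as empty for the identity to hold as written. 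The paper's negation trick glosses over this, and indeed as literally phrased it applies Part (1) to a weight $a$ with $n+1\leq a\leq 2n$, outside the hypothesis $0\leq a\leq n$ of Part (1); the correct reading is to apply Part (1) to $2n-a$ and then reindex, which is essentially what your substitution $j=n-i$ makes explicit in your closing remark. So your version is not only correct but patches a small sloppiness in the original.
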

\begin{proof}
We try 1. It is clear that $\G_{n}(i)\times\G_{n}(a-i)\subset\G_{2n}(a)$, so
$\bigcup_{i=0}^{a}\G_{n}(i)\times\G_{n}(a-i)\subseteq\G_{2n}(a)$. Now pick $X$ in $\G_{2n}(a)$ and put 
$X=(B,C)$ with $B,C\in\Z_{2}^{n}$. Then $\omega(X)=\omega(B)+\omega(C)=a$ and $B\in\G_{n}(b)$, $C\in\G_{n}(c)$ with $b+c=a$. Then $\G_{2n}(a)\subseteq\bigcup_{i=0}^{a}\G_{n}(i)\times\G_{n}(a-i)$. The item 2 results from 
multiplying $\G_{2n}(a)$ by $-1$.
\end{proof}

\begin{theorem}
No circulant Hadamard matrix $X_{C}$ exists in $[\E_{4m^{2}}]$.
\end{theorem}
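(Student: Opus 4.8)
The plan is to reduce the statement to a parity obstruction on the Hamming weight. A circulant Hadamard matrix is an orbit $X_{C}$ on which $\left\langle C\right\rangle$ acts, and every shift $C^{i}(X)$ has the same weight as $X$; hence $X_{C}\subseteq\G_{4m^{2}}(a)$ for the single value $a=\omega(X)$. Consequently $X_{C}$ can lie in the complete even maximal $S$-set $[\E_{4m^{2}}]$ only if the basic set $\G_{4m^{2}}(a)$ is itself one of the even basic sets making up $[\E_{4m^{2}}]$, i.e. only if $a$ is even. So it suffices to prove that $a$ is necessarily odd.

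To compute $a$ I would exploit that the all-$+$ sequence $\mathbf{1}$ is fixed by $C$, so it is an eigenvector of the circulant matrix $H$ with eigenvalue equal to the row sum $\sum_{i}x_{i}=2a-4m^{2}$. Since $HH^{\top}=4m^{2}I$, the matrix $H/2m$ is orthogonal and every eigenvalue of $H$ has modulus $2m$, whence $(2a-4m^{2})^{2}=4m^{2}$ and $a=2m^{2}\pm m$. Equivalently, staying inside the group-algebra language of the $S$-ring, one has $\bigl(\sum_{i}x_{i}\bigr)^{2}=\sum_{s}R(s)$, where $R(s)$ denotes the periodic autocorrelation at shift $s$; the Hadamard condition forces $R(0)=4m^{2}$ and $R(s)=0$ for $s\neq 0$, so again $(2a-4m^{2})^{2}=4m^{2}$ and $a=2m^{2}\pm m$.

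Now the parity obstruction closes the argument. Since $m$ is odd, $2m^{2}$ is even and $a=2m^{2}\pm m$ is odd, so $\G_{4m^{2}}(a)$ is an odd basic set, contained in $\Odd_{4m^{2}}$ rather than in $\E_{4m^{2}}$. By Theorem 1 every basic set of $[\E_{4m^{2}}]$ carries an even index lying in $[m^{2},3m^{2}]$, so $\G_{4m^{2}}(a)\not\subseteq\E_{4m^{2}}$ and therefore $X_{C}\notin[\E_{4m^{2}}]$. Here Lemma 1 enters as the structural bookkeeping device: writing $X=(B,C)$ with $B,C\in\Z_{2}^{2m^{2}}$ and using $\G_{4m^{2}}(a)=\bigcup_{i}\G_{2m^{2}}(i)\times\G_{2m^{2}}(a-i)$ lets me read the weight off as $\omega(B)+\omega(C)$, which is exactly where the odd total $2m^{2}\pm m$ is forced on the two halves (and is the decomposition that later isolates the exceptional odd-weight configurations recorded in the abstract).

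The step I expect to be the main obstacle is not the weight computation but pinning down the precise meaning of ``$X_{C}\in[\E_{4m^{2}}]$'' and confirming that it genuinely forces $\omega(X)$ to be even. The delicate point is that $[\E_{4m^{2}}]$ is defined through the maximality conditions on the products $\G_{4m^{2}}(a)\G_{4m^{2}}(b)$, so I must check that an orbit concentrated in a single basic set is compatible with the complete even maximal $S$-set only when that basic set actually appears among the generators of $[\E_{4m^{2}}]$; once that compatibility is verified, the odd parity of $2m^{2}\pm m$ delivers the contradiction at once.
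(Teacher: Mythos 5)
Your proof is correct, but it takes a genuinely different route from the paper's. The paper proves this theorem by examining a single autocorrelation coefficient: writing $X=(B,C)$ via Lemma~1 and noting that the shift by $2m^{2}$ swaps the two halves, so that $\omega(XC^{2m^{2}}X)=2\omega(BC)$; since $\omega(X)=2a$ is even, the product formula (\ref{producto}) shows that $BC$ lands only in basic sets $\G_{2m^{2}}(j)$ with $j$ even, never in $\G_{2m^{2}}(m^{2})$ (as $m^{2}$ is odd), whence $\mathsf{P}_{X}(2m^{2})\neq 0$. You instead use the classical row-sum argument: $\mathbf{1}$ is an eigenvector of the circulant $H$ with eigenvalue $2a-4m^{2}$, every eigenvalue of $H$ has modulus $2m$, hence $a=2m^{2}\pm m$, which is odd because $m$ is odd, so $X$ cannot lie in an even basic set at all. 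Your argument is the one the paper itself deploys only later (Theorems~3 and~4, via $\sum_{k}\mathsf{P}_{X}(k)=(2a-n)^{2}$) to dispose of the odd partition; applied here it subsumes the present theorem and in fact proves more, namely that any circulant Hadamard matrix must have weight exactly $2m^{2}\pm m$. What the paper's proof buys instead is a local obstruction in the spirit of the Schur-ring machinery --- an explicit shift $k=2m^{2}$ at which the autocorrelation of any even-weight sequence fails to vanish --- whereas yours is a global counting obstruction. One small remark: Lemma~1 is essential to the paper's argument but is not actually needed in yours; your weight computation never uses the decomposition into halves, so your closing paragraph slightly misplaces where that lemma does its work. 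Your worry about whether membership in $[\E_{4m^{2}}]$ forces even weight is easily settled: $[\E_{4m^{2}}]$ is by definition a union of even basic sets $\G_{4m^{2}}(2a)$, and a cyclic orbit is contained in a single basic set, so the parity conclusion is immediate.
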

\begin{proof}
Pick $X$ in $\G_{4m^{2}}(2a)\in[\E_{4m^{2}}]$. From the lemma above there are $B,C$ such that 
$B\in\G_{2m^{2}}(b)$, $C\in\G_{2m^{2}}(c)$ with $b+c=2a$ and $X=(B,C)$. From (\ref{producto}) we have
\begingroup\makeatletter\def\f@size{10}\check@mathfonts
\begin{equation*}
\G_{2m^{2}}(b)\G_{2m^{2}}(c)=
\begin{cases}
\bigcup\limits_{i=0}^{b}\G_{2m^{2}}(2m^{2}-2a+2i), & 0\leq b\leq m^{2},\ b\leq c\leq 2m^{2}-b,\\
\bigcup\limits_{i=0}^{2m^{2}-b}\G_{2m^{2}}(2a-2m^{2}+2i), & m^{2}+1\leq b\leq 2m^{2},\ 2m^{2}-b\leq c\leq b.  
\end{cases}
\end{equation*}
\endgroup
then $\G_{2m^{2}}(b)\G_{2m^{2}}(c)\nsupseteq\G_{2m^{2}}(m^{2})$. Hence $\omega(XC^{2m^{2}}X)=\omega(BC,BC)=2\omega(BC)\neq2m^{2}$ so no Hadamard matrix $X_{C}$ exists in $[\E_{4m^{2}}]$.
\end{proof}

On the other hand, let $X=\{x_{i}\}$ and $Y=\{y_{i}\}$ be two complex-valued sequences of period $n$. 
The periodic correlation of $X$ and $Y$ at shift $k$ is the product defined by:

\begin{equation}
\mathsf{P}_{X,Y}(k)=\sum\limits_{i=0}^{n-1}x_{i}\overline{y}_{i+k},\ k=0,1,...,n-1,
\end{equation}

where $\overline{a}$ denotes the complex conjugation of $a$ and $i+k$ is calculated modulo $n$.
If $Y=X$, the correlation $\mathsf{P}_{X,Y}(k)$ is denoted by $\mathsf{P}_{X}(k)$ and is the autocorrelation of 
$X$. 

If $X$ is a $\Z_{2}$-sequence of length $n$, 
$\mathsf{P}_{X}(k)= 2\omega \left\{Y_{k}\right\} -n $, where $ Y_{k}=XC^{k}X $. Also by (\ref{producto}), 
if $X\in \G_{n}(a)$, then

\begin{equation}
\mathsf{P}_{X}(k)=n-4a+4i_{k},
\end{equation} 
for some $0\leq i_{k}\leq a$ and $n-\mathsf{P}_{X}(k)$ is divisible by 4 for all $k$.\\ 

Let $(\mathsf{P}_{X}(0),\mathsf{P}_{X}(1),\dots ,\mathsf{P}_{X}(n-1))$ denote the autocorrelation vector of
$X_{C}$ in $\mathbb{Z}_{2C}^{n}$. To prove the conjecture is equivalent to prove that there is no $X_{C}$ 
such that $(\mathsf{P}_{X}(0),\mathsf{P}_{X}(1),\dots ,\mathsf{P}_{X}(n-1))=(4m^{2},0,...,0)$. Thus, it is 
enough to prove that $\mathsf{P}_{X}(k)\neq 0$ for some $k\neq 0$. From theorem 2, 
$\mathsf{P}_{X}(2m^{2})\neq0$ if $X_{C}\in\G_{4m^{2}}(a)$, $a$ an even number. 
We will show that this is also true for $a$ an odd number.

\begin{theorem}
Let $\mathfrak{A}(\Z_{2C}^{n})$ denote the set of autocorrelations vector in $\Z^{n}$
and let $\theta:\Z_{2C}^{n}\rightarrow\mathfrak{A}(\Z_{2C}^{n}) $
be the mapping $\theta(X_{C})=(\mathsf{P}_{X}(0),\mathsf{P}_{X}(1),\dots ,\mathsf{P}_{X}(n-1))$. Then
$\theta$ sends the plane $\G_{n}(a)$ to the plane $X_{0}+X_{1}+\cdots +X_{n-1}=(2a-n)^{2}$.
\end{theorem}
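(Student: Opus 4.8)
The plan is to reduce the statement to the single scalar identity $\sum_{k=0}^{n-1}\mathsf{P}_{X}(k)=(2a-n)^{2}$, valid for every representative $X\in\G_{n}(a)$, since this one equation says exactly that the image point $\theta(X_{C})=(\mathsf{P}_{X}(0),\dots,\mathsf{P}_{X}(n-1))$ lies on the stated hyperplane. First I would record that a $\Z_{2}$-sequence has real entries, so complex conjugation is trivial and the defining formula collapses to $\mathsf{P}_{X}(k)=\sum_{i=0}^{n-1}x_{i}x_{i+k}$, with all indices read modulo $n$.

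The computational heart is a single interchange of summation. Summing over all shifts and factoring,
\begin{equation*}
\sum_{k=0}^{n-1}\mathsf{P}_{X}(k)=\sum_{k=0}^{n-1}\sum_{i=0}^{n-1}x_{i}x_{i+k}=\sum_{i=0}^{n-1}x_{i}\Bigl(\sum_{k=0}^{n-1}x_{i+k}\Bigr).
\end{equation*}
For each fixed $i$ the map $k\mapsto(i+k)\bmod n$ permutes $\{0,\dots,n-1\}$, so the inner sum equals the total $S:=\sum_{j=0}^{n-1}x_{j}$ independently of $i$. Pulling $S$ out yields $\sum_{k}\mathsf{P}_{X}(k)=S\cdot\sum_{i}x_{i}=S^{2}$. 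It then remains only to evaluate $S$ by weight: a sequence in $\G_{n}(a)$ has exactly $a$ entries equal to $+$ and $n-a$ equal to $-$, hence $S=a-(n-a)=2a-n$, giving $\sum_{k}\mathsf{P}_{X}(k)=(2a-n)^{2}$.

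Two small points deserve a sentence so that the assertion about the map $\theta$ is clean. First, $\theta$ is well defined on the orbit $X_{C}$, because reindexing gives $\mathsf{P}_{C(X)}(k)=\sum_{i}x_{i+1}x_{i+1+k}=\mathsf{P}_{X}(k)$, so the whole autocorrelation vector is constant along an orbit. Second, the value $S^{2}=(2a-n)^{2}$ depends only on $a$ and not on the chosen representative, which is precisely why all of $\theta(\G_{n}(a))$ meets one common hyperplane.

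I do not expect a genuine obstacle here: the result is a direct identity rather than a deep fact. The only place requiring care is the modular index bookkeeping in the interchange of sums, where one must confirm that $k\mapsto i+k$ truly runs through every residue for each fixed $i$ (so that the inner sum is the full $S$ and not a partial sum). Once that is checked, the rest is immediate.
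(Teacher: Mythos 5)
Your proposal is correct and follows exactly the same route as the paper's (one-line) proof, namely establishing $\sum_{k=0}^{n-1}\mathsf{P}_{X}(k)=\bigl(\sum_{i=0}^{n-1}x_{i}\bigr)^{2}=(2a-n)^{2}$; you simply spell out the interchange of summation and the well-definedness on orbits that the paper leaves implicit.
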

\begin{proof}
Take $X_{C}$ in $\G_{n}(a)$. The proof follows of $\sum\limits_{k=0}^{n-1}\mathsf{P}_{X}(k)=\left(\sum\limits_{i=0}^{n-1}x_{i}\right)^{2}=(2a-n)^{2}$
\end{proof}

\begin{theorem}
No circulant Hadamard matrix $X_{C}$ exists in $[\Odd_{4m^{2}}]$, except possibly in $\G_{4m^{2}}(2m^{2}\pm m)$. \end{theorem}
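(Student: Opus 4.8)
The plan is to bypass the half-shift computation used for the even case and instead reduce everything to the single global identity of the preceding theorem (the plane identity $\sum_{k=0}^{n-1}\mathsf{P}_X(k)=(2a-n)^2$). First I would take a circulant Hadamard matrix $X_C\in\G_{4m^2}(a)$ with $a$ odd. By the defining property of a Hadamard matrix its autocorrelation vector is forced to be $\theta(X_C)=(\mathsf{P}_X(0),\dots,\mathsf{P}_X(4m^2-1))=(4m^2,0,\dots,0)$: the off-diagonal periodic correlations all vanish, while $\mathsf{P}_X(0)=\sum_i x_i^2=4m^2$. In particular the coordinate sum of $\theta(X_C)$ equals $4m^2$.

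The key step is then to evaluate that same coordinate sum a second way. Applying Theorem 3 with $n=4m^2$, the point $\theta(X_C)$ lies on the plane $X_0+\cdots+X_{n-1}=(2a-4m^2)^2$, so its coordinate sum also equals $(2a-4m^2)^2$. Equating the two evaluations gives $(2a-4m^2)^2=4m^2$, hence $|2a-4m^2|=2m$ and $a=2m^2\pm m$. Since $m$ is odd, both values $2m^2\pm m$ are odd and, for $m\ge 1$, both lie in the admissible window $m^2\le a\le 3m^2$ furnished by Theorem 1; they are therefore genuinely consistent with membership in $[\Odd_{4m^2}]$, while every other odd weight is excluded. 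This is exactly the assertion, and it simultaneously explains why Theorem 2 is subsumed: no even $a$ can equal the odd numbers $2m^2\pm m$.

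To recover the finer description of the two surviving classes (the half-weight intervals quoted in the abstract) I would return to the half-shift. Writing $X=(B,C)$ with $B,C\in\Z_2^{2m^2}$ as in the Lemma, the vanishing $\mathsf{P}_X(2m^2)=0$ forces $\omega(BC)=m^2$. Counting the four overlap types of $B$ and $C$ and using that $(BC)_i=+$ precisely where the two halves agree, one converts $\omega(BC)=m^2$ together with $\omega(B)+\omega(C)=a=2m^2\pm m$ into the stated bounds on $b=\omega(B)$ and the relation determining $c=\omega(C)$; here the product formula (\ref{producto}) just records which basic sets $\G_{2m^2}(\cdot)$ can contain $BC$.

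I expect the real obstacle to be conceptual rather than computational: the method terminates at exactly two weight classes and cannot be pushed past them. The quadratic $(2a-n)^2=n$ has only the solutions $a=2m^2\pm m$, and for each of them the overlap count shows $\omega(BC)=m^2$ is actually attainable, so neither the plane identity nor the half-shift can rule out $\G_{4m^2}(2m^2\pm m)$. This is precisely why the conclusion must be phrased as ``except possibly'' for these two classes and is the point at which the present approach reaches its natural limit.
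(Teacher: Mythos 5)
Your argument is correct and is essentially the paper's own proof: both combine the plane identity $\sum_{k}\mathsf{P}_{X}(k)=(2a-4m^{2})^{2}$ from the preceding theorem with $\mathsf{P}_{X}(0)=4m^{2}$ and the vanishing of the off-peak correlations to force $(2a-4m^{2})^{2}=4m^{2}$, i.e.\ $a=2m^{2}\pm m$ (the paper phrases it contrapositively, noting the sum $(2a-4m^{2})^{2}-4m^{2}$ is nonzero for every other admissible odd $a$). Your additional half-shift discussion is not needed for this theorem and merely anticipates the later results on the sets $\G_{2m^{2}}(a)\times\G_{2m^{2}}(b)$.
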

\begin{proof}
From the theorem above, $\theta$ sends the plane $\G_{4m^{2}}(a)$ in the plane 
$X_{0}+X_{1}+\cdots +X_{4m^{2}-1}=(2a-4m^{2})^{2}$. As $\mathsf{P}_{X}(0)=4m^{2}$, then we have the plane 
$X_{1}+\cdots +X_{4m^{2}-1}=(2a-4m^{2})^{2}-4m^{2}$. From theorem 1 and theorem 2, $a$ take values in 
$m^{2}+2i$ with $0\leq i\leq m^{2}$. Hence $(2a-4m^{2})^{2}-4m^{2}$ is never $0$ except for $a=2m^{2}\pm m$.
Therefore there are $k$ such that $\mathsf{P}_{X}(k)\neq0$ and no circulant Hadamard matrix exists in $\G_{4m^{2}}(a)\in[\Odd_{4m^{2}}]$ with $a$ distinct to $2m^{2}\pm m$.
\end{proof}

A $(v,k,\lambda)$-difference set in a finite group $G$ of order $v$ is a $k$-subset $D$ of $G$ such that every
element $g\neq1$ of $G$ has exactly $\lambda$ representations $g=d_{1}d_{2}^{-1}$ with $d_{1},d_{2}\in D$. The order of $D$ is $k$. A circulant Hadamard matrix of order $4m^{2}$ exist if and only if there is a 
$(4m^{2},2m^{2}-m,m^{2}-m)$-difference set in the group $\mathbb{Z}_{4m^{2}}$ of order $4m^{2}$. The preceding theorem implies that if $X_{C}$ is a circulant Hadamard matrix then $X$ has Hamming weight $2m^{2}-m$ and this
agrees with the order of its difference set. 

Next, we will reduce the number of $\mathbb{Z}_{2}$-sequences in the search for circulant Hadamard matrices.

\begin{theorem}
A circulant Hadamard matrix $X_{C}$ can exist in $\G_{2m^{2}}(a)\times\G_{2m^{2}}(b)$ with $\frac{m^{2}-m}{2}\leq a\leq\frac{3m^{2}-m}{2}$ and $b=2m^{2}-m-a$.
\end{theorem}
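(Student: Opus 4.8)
The plan is to isolate the obstruction in the single autocorrelation value $\mathsf{P}_{X}(2m^{2})$ at the half-shift, exactly as was done in the proof of Theorem 2, and then to decide for which $a$ this value can vanish. By Theorem 4 together with the difference-set remark, any circulant Hadamard matrix $X_{C}$ has its representative $X$ lying in $\G_{4m^{2}}(2m^{2}-m)$. First I would apply part~1 of the decomposition Lemma to write $X=(B,C)$ with $B\in\G_{2m^{2}}(a)$ and $C\in\G_{2m^{2}}(b)$, where $a+b=\omega(X)=2m^{2}-m$ forces $b=2m^{2}-m-a$. This is precisely the ambient product set $\G_{2m^{2}}(a)\times\G_{2m^{2}}(b)$ of the statement, so it remains to show that the Hadamard property cannot hold unless $a$ falls in the asserted interval.

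Next I would extract the arithmetic meaning of the Hadamard condition at shift $k=2m^{2}$. Since $C^{2m^{2}}$ interchanges the two halves of a length-$4m^{2}$ sequence, one has $C^{2m^{2}}(X)=(C,B)$ and hence $XC^{2m^{2}}X=(BC,BC)$, so $\omega(XC^{2m^{2}}X)=2\omega(BC)$ and $\mathsf{P}_{X}(2m^{2})=4\omega(BC)-4m^{2}$. A circulant Hadamard matrix requires $\mathsf{P}_{X}(2m^{2})=0$, that is $\omega(BC)=m^{2}$. Thus the whole question reduces to: for which values of $a$ can the componentwise product $BC$ of a weight-$a$ and a weight-$b$ sequence have weight exactly $m^{2}$?

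Finally I would read off the attainable weights of $BC$ from the product formula (\ref{producto}) in $\Z_{2}^{2m^{2}}$. Because $a+b=2m^{2}-m<2m^{2}$, neither weight exceeds $m^{2}$, so after using the commutativity $\G_{2m^{2}}(a)\G_{2m^{2}}(b)=\G_{2m^{2}}(b)\G_{2m^{2}}(a)$ to place the smaller index first, only the first branch of (\ref{producto}) is ever needed; it yields $\G_{2m^{2}}(a)\G_{2m^{2}}(b)=\bigcup_{i=0}^{\min(a,b)}\G_{2m^{2}}(m+2i)$. Hence the weights realizable by $BC$ are exactly $m,m+2,\dots,m+2\min(a,b)$. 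Since $m$ is odd, $\tfrac{m^{2}-m}{2}$ is an integer and $m^{2}=m+2\cdot\tfrac{m^{2}-m}{2}$ sits on this progression, so the target weight $m^{2}$ is attainable if and only if $\min(a,b)\geq\tfrac{m^{2}-m}{2}$. Substituting $b=2m^{2}-m-a$, the pair of inequalities $a\geq\tfrac{m^{2}-m}{2}$ and $b\geq\tfrac{m^{2}-m}{2}$ is equivalent to $\tfrac{m^{2}-m}{2}\leq a\leq\tfrac{3m^{2}-m}{2}$. Outside this interval $\omega(BC)=m^{2}$ is impossible, whence $\mathsf{P}_{X}(2m^{2})\neq0$ and no circulant Hadamard matrix can occur, which gives the claim.

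I expect the main obstacle to be purely a matter of careful bookkeeping rather than a conceptual difficulty: one must confirm that the constraint $a+b=2m^{2}-m$ keeps the computation entirely within the first branch of (\ref{producto}) (so that the clean $\min(a,b)$ description is legitimate), and must check the parity/integrality point that places $m^{2}$ on the arithmetic progression $m,m+2,\dots$ rather than strictly between two of its terms. The analogous statement for the complementary weight $2m^{2}+m$, with $b=m+a$, would follow by the same argument applied to $-X$, which exchanges the roles of $+$ and $-$ and replaces each weight $w$ by $4m^{2}-w$.
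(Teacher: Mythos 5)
Your proposal is correct and follows essentially the same route as the paper: decompose $X=(B,C)$ via Lemma 1, observe that $\mathsf{P}_{X}(2m^{2})=0$ forces $\G_{2m^{2}}(m^{2})\subseteq\G_{2m^{2}}(a)\G_{2m^{2}}(b)$, and read the admissible interval for $a$ off the product formula (\ref{producto}); your use of commutativity and $\min(a,b)$ merely merges into one step the paper's separate treatment of the two branches. One small slip: $a+b=2m^{2}-m<2m^{2}$ does not imply that \emph{neither} weight exceeds $m^{2}$, only that $\min(a,b)\leq m^{2}$, but that weaker fact is all your argument actually uses.
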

\begin{proof}
By lemma 1, $\G_{4m^{2}}(2m^{2}-m)=\bigcup_{a+b=2m^{2}-m}\G_{2m^{2}}(a)\times\G_{2m^{2}}(b)$ and by (\ref{producto})
\begingroup\makeatletter\def\f@size{11}\check@mathfonts
\begin{equation*}
\G_{2m^{2}}(a)\G_{2m^{2}}(b)=
\begin{cases}
\bigcup\limits_{i=0}^{a}\G_{2m^{2}}(m+2i), & 0\leq a\leq m^{2},\ a\leq b\leq 2m^{2}-a,\\
\bigcup\limits_{i=0}^{2m^{2}-a}\G_{2m^{2}}(-m+2i), & m^{2}+1\leq a\leq 2m^{2},\ 2m^{2}-a\leq b\leq a.  
\end{cases}
\end{equation*}
\endgroup
As $\G_{2m^{2}}(a)\G_{2m^{2}}(b)$ must contain $\G_{2m^{2}}(m^{2})$ then $\frac{m^{2}-m}{2}\leq a\leq m^{2}$ and from $a\leq b\leq 2m^{2}-a$ and $b=2m^{2}-m-a$ it follows that $a\leq\frac{2m^{2}-m-1}{2}$. Hence 
$\frac{m^{2}-m}{2}\leq a\leq\frac{2m^{2}-m-1}{2}$. Equally, $m^{2}+1\leq a\leq\frac{3m^{2}-m}{2}$ and from
$2m^{2}-a\leq b\leq a$ it follows that $\frac{2m^{2}-m+1}{2}\leq a\leq\frac{3m^{2}-m}{2}$. Thereby
$\frac{m^{2}-m}{2}\leq a\leq\frac{3m^{2}-m}{2}$.
\end{proof}

\begin{theorem}
A circulant Hadamard matrix $X_{C}$ can exist in $\G_{2m^{2}}(a)\times\G_{2m^{2}}(b)$ with $\frac{m^{2}+m}{2}\leq 2m^{2}-a\leq\frac{3m^{2}+m}{2}$ and $b=m+a$.
\end{theorem}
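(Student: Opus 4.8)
The plan is to dispatch the one remaining Hamming weight, $2m^{2}+m$, which is the survivor of the earlier analysis not yet covered by the preceding theorem (that theorem handled weight $2m^{2}-m$). The cleanest route is to piggyback on the preceding theorem through the negation symmetry of Hadamard matrices: since $HH^{t}=nI_{n}$ is invariant under $H\mapsto -H$, the class $X_{C}$ is a circulant Hadamard matrix if and only if $(-X)_{C}$ is one, and flipping every sign sends a sequence of weight $k$ to one of weight $n-k$. Concretely, if $X=(B,C)$ with $B\in\G_{2m^{2}}(a)$ and $C\in\G_{2m^{2}}(b)$, then $-X=(-B,-C)$ with $-B\in\G_{2m^{2}}(2m^{2}-a)$ and $-C\in\G_{2m^{2}}(2m^{2}-b)$, so the total weight moves from $2m^{2}-m$ to $2m^{2}+m$.

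First I would invoke the preceding theorem: a circulant Hadamard matrix of weight $2m^{2}-m$ can lie only in $\G_{2m^{2}}(a)\times\G_{2m^{2}}(b)$ with $b=2m^{2}-m-a$ and $\frac{m^{2}-m}{2}\le a\le\frac{3m^{2}-m}{2}$. Negating, $-X$ lands in $\G_{2m^{2}}(2m^{2}-a)\times\G_{2m^{2}}(m+a)$, because $2m^{2}-b=2m^{2}-(2m^{2}-m-a)=m+a$, and this class has weight $2m^{2}+m$. Substituting $a\mapsto 2m^{2}-a$ in the window $\frac{m^{2}-m}{2}\le a\le\frac{3m^{2}-m}{2}$ turns it into $\frac{m^{2}+m}{2}\le 2m^{2}-a\le\frac{3m^{2}+m}{2}$, exactly the claimed range, with partner weight $b=m+a$. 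Thus the weight-$(2m^{2}+m)$ candidates are precisely the negatives of the weight-$(2m^{2}-m)$ candidates already located, and no fresh constraint analysis is needed.

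For a derivation that does not appeal to negation, I would instead repeat the mechanism of the preceding proof directly at weight $2m^{2}+m$. The driving reduction is weight-independent: for any candidate $X=(B,C)$ the half-period shift gives $XC^{2m^{2}}X=(BC,BC)$, so $\mathsf{P}_{X}(2m^{2})=4\omega(BC)-4m^{2}$, whence $\mathsf{P}_{X}(2m^{2})=0$ forces $\omega(BC)=m^{2}$; since $BC$ lies in the $S$-set $\G_{2m^{2}}(a)\G_{2m^{2}}(b)$, a Hadamard $X_{C}$ can exist only if $\G_{2m^{2}}(m^{2})\subseteq\G_{2m^{2}}(a)\G_{2m^{2}}(b)$. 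Decomposing $\G_{4m^{2}}(2m^{2}+m)$ by Lemma 1 and feeding each pair with $a+b=2m^{2}+m$ into (\ref{producto}), the relevant branch reads $\bigcup_{i=0}^{2m^{2}-a}\G_{2m^{2}}(m+2i)$; solving $m+2i=m^{2}$ gives the admissible index $i=\frac{m^{2}-m}{2}$ (an integer since $m$ is odd, in accordance with the parity clause of (\ref{estruc_cons})), and reading off the resulting boundary inequalities on $a$ and $b$ reproduces the same window.

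The step I expect to demand the most care is bookkeeping rather than anything conceptual. In the direct route the condition $a+b=2m^{2}+m>2m^{2}$ puts every admissible pair in the ``$a+b\ge n$'' regime of (\ref{producto}) (here $n=2m^{2}$), so I must apply the formula to whichever of $a$ and $b$ exceeds $m^{2}$ and then verify the endpoints so that the two sub-ranges fuse into the single interval $\frac{m^{2}+m}{2}\le 2m^{2}-a\le\frac{3m^{2}+m}{2}$. In the negation route the only subtlety is tracking the two simultaneous substitutions $a\mapsto 2m^{2}-a$ and $b\mapsto m+a$ and confirming that the inequalities transform as stated; because the underlying vanishing-autocorrelation mechanism is identical to that of the preceding theorem, there is no new idea to supply beyond this symmetry.
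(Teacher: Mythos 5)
Your negation argument is precisely the paper's own proof: the paper's entire justification reads ``Multiplying $\G_{2m^{2}}(a)\times\G_{2m^{2}}(b)$ by $-1$ in the previous theorem,'' and you have simply made explicit the weight bookkeeping $a\mapsto 2m^{2}-a$, $2m^{2}-b=m+a$ and the transformed interval $\frac{m^{2}+m}{2}\leq 2m^{2}-a\leq\frac{3m^{2}+m}{2}$ that this one-line step entails. The direct re-derivation you sketch as a second route is correct in outline but unnecessary; the paper relies solely on the negation symmetry.
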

\begin{proof}
Multiplying $\G_{2m^{2}}(a)\times\G_{2m^{2}}(b)$ by $-1$ in the previous theorem.
\end{proof}

From the previous theorems the search for circulant Hadamard matrices in a set of order
$2\binom{4m^{2}}{2m^{2}-m}$ is reduced to the search in a set of order
\begin{equation*}
2\sum_{a=(m^{2}-m)/2}^{(3m^{2}-m)/2}\binom{2m^{2}}{a}\binom{2m^{2}}{2m^{2}-m-a}
\end{equation*}

\Addresses

\end{document}